\definecolor{violet}{rgb}{0.0,0.2,0.7}
\definecolor{rouge2}{rgb}{0.8,0.0,0.2}
\newcommand{\R}{\mathbb{R}}
\newcommand{\Q}{\mathbb{Q}}
\renewcommand{\O}{\mathcal{O}}
\newcommand{\C}{\mathcal{C}}
\newcommand{\ep}{\varepsilon}
\newcommand{\la}{\langle}
\newcommand{\ra}{\rangle}
\renewcommand{\geq}{\geqslant}
\renewcommand{\leq}{\leqslant}
\newcommand{\Ric}{\mathrm{Ric} \,}
\newcommand{\Supp}{\mathrm {Supp}}
\newcommand{\ord}{\mathrm {ord}}
\newcommand{\rk}{\mathrm {rk}}
\newcommand{\tr}{\mathrm{tr}}
\newcommand{\TB}{\otimes ^mT^\star_X\langle B\rangle}
\newcommand{\F}{\mathscr{F}}
\newcommand{\ddc}{dd^c}
\newcommand\cF{{\mathcal{F}}}
\newcommand\cO{{\mathcal{O}}}
\newcommand\cI{{\mathcal{I}}}
\def\Vol{\mathop{\rm Vol}\nolimits}
\def\dbar{\overline\partial}
\let\ol=\overline
\let\wh=\widehat
\newtheorem{cor}{Corollary}
\newtheorem{lem}{Lemma}
\newtheorem{defn}{Definition}
\numberwithin{equation}{section}
\begin{document}

\frontmatter 

\title[Holomorphic tensors]{Positivity properties of the bundle of logarithmic tensors 
on compact K\"ahler manifolds}

\date{\today}
\author{Fr\'ed\'eric Campana}
\address{Institut Elie Cartan, Nancy \\
Universit\'e de Lorraine \\
Nancy}
\email{Frederic.Campana@univ-lorraine.fr}

\author{Mihai P\u{a}un}
\address{Korea Institute for Advanced Study\\
School of Mathematics, 85 Hoegiro, Dongdaemun-gu, Seoul 130-722, Korea}
\email{paun@kias.re.kr}


\subjclass{32Q05, 32Q10, 32Q15, 32Q20, 32U05, 32U15}
\keywords{Tensors, Curvature, Monge-Amp\`ere equations}

\begin{abstract} 
Let $X$ be a compact K\"ahler manifold, endowed with an effective reduced
divisor $B= \sum Y_k$ having simple normal crossing support. We consider a closed form
of (1,1)-type $\alpha$ on $X$ which is semi-positive, such that the class $c_1(K_X+ B)+ \{\alpha\}\in H^{1,1}(X, \R)$ is pseudo-effective. A particular case of the first result we establish in this short note states the following. Let $m$ be a positive integer, and let $L$ be a line bundle 
on $X$, such that there exists a generically injective morphism 
$L\to \otimes ^mT_X^\star\langle B\rangle$, where we denote by $T_X^\star\langle B\rangle$ 
the logarithmic co-tangent bundle associated to the pair $(X, B)$. Then for any
K\"ahler class $\{\omega\}$ on $X$, we have the inequality
$$\displaystyle \int_Xc_1(L)\wedge \{\omega\}^{n-1}\leq m
\int_X(c_1(K_X+ B)+ \{\alpha\})\wedge \{\omega\}^{n-1}.$$ If $X$ is projective, then
this result gives a generalization of a criteria due to Y.~Miyaoka, concerning the generic semi-positivity: under the hypothesis above, let $Q$ be the quotient of $\otimes ^mT_X^\star\langle B\rangle$ by $L$. Then its degree on a generic complete intersection curve $C\subset X$ is bounded from below by 
$$\displaystyle \Big(\frac{n^m-1}{n-1}- m\Big)\int_C(c_1(K_X+ B)+ \{\alpha\})-\frac{n^m-1}{n-1}\int_C\alpha.$$  
As a consequence, we obtain a new proof of one of the main results of our preceding work 
\cite{CP}. This proof is largely inspired by the theory of orbifolds as developped by the first named author, although we are not
making an explicit use of this theory.
\end{abstract}

\maketitle

\tableofcontents

\section{Introduction}

Let $X$ be a compact K\"ahler manifold of dimension $n$. Let 
$B= \sum_{j\in J}Y_j$ be a reduced ``boundary" divisor on $X$, whose support has simple normal crossings. 

We recall that the logarithmic co-tangent bundle $T^\star_X\langle B\rangle$ associated to $(X, B)$ 
is defined as follows. Let $U\subset X$ be a coordinate open set, and let 
$(z^1,\dots z^n)$ be a coordinate system of $X$ defined on $U$, such that 
say $B\cap U= (z^1z^2\dots z^p= 0)$. Such a coordinate system will be called \emph{adapted} to the pair
$(X, B)$. When restricted to $U$, the bundle $T^\star_X\langle B\rangle$ corresponds to the locally free sheaf generated by
$$\frac{d z^1}{z^1}, \dots, \frac{d z^p}{z^p}, dz^{p+1},\dots dz^n.$$

The main topic we will explore in this paper can be formulated as follows. Let $m$ be a positive 
integer, and let $\cF$ be a sub-sheaf of the bundle
$\otimes ^mT_X^\star\langle B\rangle$. Let $\omega$ be a K\"ahler class on $X$; we would like to 
obtain a upper bound of the intersection number
\begin{equation}
\int_Xc_1(\cF)\wedge \omega^{n-1}
\end{equation}
in terms of the first Chern class of the bundle $T_X^\star\langle B\rangle$.
\smallskip

We observe that this kind of questions has a meaning in ``abstract" context. Indeed, we 
consider the following data: let $(E, h_E)$ be a holomorphic Hermitian vector bundle, and let 
$(L, h_L)$ be a Hermitian line bundle. We assume that  
$H^0(X, E\otimes L^{-1})\neq 0$, and let $u$ be a non-identically zero holomorphic section of
$E\otimes L^{-1}$. 

We denote by $|u|^2$ the pointwise norm of $u$, measured with respect to the metrics we have 
on $E$ and $L$, respectively. Then the analogue of the Poincar\'e-Lelong formula (cf. e.g. \cite{D1})
gives
\begin{equation}\ddc\log |u|^2\geq \Theta_{h_L}(L)- \frac{\langle\Theta_{h_E}(E)u, u\rangle}{|u|^2}.\end{equation}
On the other hand, the quantity we are interested in 
is equal to $\displaystyle \int_X \Theta_{h_L}(L)\wedge \omega^{n-1}$; by the inequality (1.2) combined with Stokes formula we obtain
$$0\geq \int_X \Theta_{h_L}(L)\wedge \omega^{n-1}- \int_X \frac{\langle\Theta_{h_E}(E)u, u\rangle}{|u|^2}\wedge \omega^{n-1}.$$
In conclusion, the degree of $L$ with respect to $\omega$ is bounded from above as follows
\begin{equation}\int_X \Theta_{h_L}(L)\wedge \omega^{n-1}\leq \int_X \frac{\langle\Theta_{h_E}(E)u, u\rangle}{|u|^2}\wedge \omega^{n-1}.\end{equation}
We denote by $\Lambda_\omega$ the contraction operator corresponding to $\omega$; his action on the curvature tensor is given by the 
formula
$$\frac{\Theta_{h_E}(E)\wedge \omega^{n-1}}{\omega^n}= \Lambda_\omega \Theta_{h_E}(E)$$
so that $\Lambda_\omega \Theta_{h_E}(E)$ is an endomorphism of $E$, called the 
\emph{mean curvature} of $E$.

If the bundle $E$ is stable with respect to $\omega$, then it is known cf. \cite{Don85} that we can construct a
Hermite-Einstein metric $h_E$ so that the endomorphism $\Lambda_\omega \Theta_{h_E}(E)$ is a multiple of the 
identity. In this case, the expression in the right hand side of (3) is quickly computed in terms of the $\omega$-degree of $E$. But in general, the mean curvature term seems to be very difficult to control.
\smallskip

\noindent 
In the article \cite{Enoki}, I. Enoki found a very elegant way of dealing with this question. He observes that 
if the bundle $E$ i.e. equal e.g. to the cotangent bundle of $X$, then the mean curvature 
of $E$ coincides with $-\Ric_\omega$, i.e. the curvature of the canonical bundle. In conclusion, via the Monge-Amp\`ere equation, it 
would be enough to assume that the canonical bundle of $X$ has a (weak) positivity property in order to derive  
a bound for the degree of $L$ with respect to $\omega$. 
\medskip

 The approach sketched above, combined with some recent results in the framework of metrics with 
conic singularities allows us to establish the following statement.

\begin{theo} Let $m$ be a positive 
integer, and let $\cF$ be a torsion free sub-sheaf of the bundle
$\otimes ^mT_X^\star\langle B\rangle$. We consider a semi-positive definite, closed (1,1)-form $\alpha$ on $X$, such that the class 
$$c_1(K_X+ B)+ \{\alpha\}$$
is pseudo-effective. Let  
$\omega_0$ be a K\"ahler class on $X$; then we have
\begin{equation} \frac{1}{\rk(\F)}\int_Xc_1(\F)\wedge \{\omega_0\}^{n-1}\leq m
\int_X(c_1(K_X+ B)+ \{\alpha\})\wedge \{\omega_0\}^{n-1}.
\end{equation}
\end{theo}

\noindent As a consequence, we obtain a new proof of the following result. Our arguments here are largely inspired by 
the original ones, cf. \cite{CP}.

\begin{theo} \cite{CP} Let $X$ be a projective manifold;
we assume that there exist a positive integer $m$ and a big line bundle $L$ such that 
\begin{equation}H^0(X, \otimes ^m T^\star_X\langle B\rangle\otimes L^{-1})\neq 0.
\end{equation}
Then $K_X+ B$ is big.
\end{theo}

\medskip

 In the first part of our note we will briefly recall the notion of pseudo-effective classes 
in K\"ahler context,
as well as a few results concerning the Monge-Amp\`ere equations. The later topic concerns
the so-called \emph{metrics with conic singularities}; specifically, the result we need was established in \cite{CGP}. We remark here that the uniformity of the estimates obtained in \cite{CGP}
will allow us a ``smooth" presentation of the proof. 

Roughly speaking, the proof of Theorem 1.1 follows by using the approach initiated by I. Enoki mentioned above, combined with the estimates we have at our disposal in the Monge-Amp\`ere theory. 
As for Theorem 1.2, we basically follow the ``second proof" in \cite{CP}, and use Theorem 1.1 
as main technical support, instead of the generic semi-positivity of orbifold cotangent bundle
in the original argument.

Parts of the techniques we will use here are equally employed by H. Guenancia in \cite{G2}, in order to
establish the stability of the tangent bundle of the canonically polarized manifolds in a singular setting. 
Finally, we mention here that 
in order to obtain a stronger version of Theorem 1.1,
the best one can expect would be to replace $\omega_0^{n-1}$ with a mobile class. Our hope is that the methods we develop here might be useful in this context.

\medskip

\noindent
\textbf{Acknowledgements.}
It is our privilege to thank H. Guenancia for many discussions concerning the topics analyzed here, and for kindly pointing out a 
rather severe flaw in a previous version of this paper. Part of the present article was prepared during the conference \emph{Complex Monge-Amp\`ere equations}, hosted by BIRS in 2014. We would like 
to thank the organizers P. Eyssidieux, V. Guedj and S. Boucksom for the invitation, and to express our gratitude for the excellent working conditions provided by BIRS.

\section{Pseudo-effective classes and Monge-Amp\`ere equations}

Let $X$ be a compact complex manifold, and let $\{\gamma\}\in H^{1,1}(X, \R)$ be a real cohomology class of type (1,1). By convention, we assume that $\gamma$ is a closed, non-singular (1,1)
form on $X$; we recall the following definition.

\begin{defn}
We say that the class $\{\gamma\}$ is pseudo-effective if it contains a closed positive current. 
This is equivalent to the existence of a function $f\in L^1(X)$ such that 
\begin{equation}
T:= \gamma + \ddc f\geq 0.  
\end{equation}
\end{defn}

Let $\{\gamma\}$ be a pseudo-effective class.
If some multiple of it equals the first Chern class of 
a $\Q$-line bundle $L$ and if $X$ is projective, 
then the bundle $L$ is pseudo-effective in the sense of algebraic geometry,
i.e. given $p$ a positive integer and an ample line bundle $A$, there exists a constant $C= C(p, A)> 0$ such that
$$h^0\big(X, k(pL+A)\big)\geq Ck^n$$
as $k\to \infty$.
In  other words, the Kodaira dimension of $pL+ A$ is maximal. This statement is a consequence of \cite{D2}, and can be seen as a generalization of the Kodaira embedding theorem. Even if the manifold $X$ is projective, the property of pseudo-effective line bundles (in the sense of algebraic geometry) revealed by the previous definition is important: such a line bundle may have negative 
Kodaira dimension, but nevertheless, its first Chern class carries a closed positive current.

While dealing with pseudo-effective classes on compact K\"ahler manifolds, one seldom works directly with the current $T$ above, mainly because it may be too singular. The regularization theorem we will state next is therefore very useful. Before that, we recall the following notion.

\begin{defn}
A function $\psi$ on $X$ has logarithmic poles if for each coordinate set $U\subset X$ there exists a family of holomorphic functions $\sigma_j\in \O(U)$ such that we have
\begin{equation}
\psi\equiv c\log \big(\sum_j |\sigma_j|^2\big) \hbox { mod } \C^\infty(U)
\end{equation} 
where $c$ is a positive constant. In other words, the function $\psi$ is locally equivalent with
the logarithm of a sum of squares of absolute values of holomorphic functions.
\end{defn}
\medskip

\noindent The following regularization result will play an important role here.

\begin{theo} \cite{D2} \label{reg}
Let $T= \gamma + \ddc f\geq 0$ be a closed positive current. Then there exists a sequence $(f_\eta)_{\eta> 0}$ of functions with logarithmic poles, such that
$$T_\eta:= \gamma + \ddc f_\eta\geq -\eta\omega$$ 
for all $\eta$, and such that $f_\eta\to f$ in $L^1(X)$.
\end{theo}
\medskip

\noindent We recall next a few results in the theory of the Monge-Amp\`ere equations. 
For the rest of our paper, the manifold $X$ will be assumed to be compact
K\"ahler, and let $\omega_0$ be a fixed, reference metric on $X$. 
	
Theorem 1.1 states that a certain numerical positivity property 
of the bundle $\otimes ^mT^\star_X\langle B\rangle$ holds, provided that the negative part of the Chern class of $K_X+ B$ is bounded. 

The requirement concerning the bundle $K_X+ B$ in Theorem 1.1 is equivalent to the existence of a singular 
volume element on $X$, whose associated curvature current is greater than $-\alpha$.
The link between this kind of positivity (or negativity) properties of the canonical bundle 
and the differential geometry of $X$ is given by the 
Monge-Amp\`ere theory, more precisely, by the Aubin-Calabi-Yau theorem. In its original formulation 
\cite{Yau78}, this result states that given a non-singular volume element on $X$, whose total mass is
equal to $\displaystyle \int_X\omega_0^n$, there exists a K\"ahler metric 
$\omega\in \{\omega_0\}$ whose determinant is equal to the said volume element.
\medskip

\noindent Partly motivated by questions arising from algebraic geometry 
many results generalizing the Calabi-Yau theorem for singular volume elements were established recently. The following statement will be the main technical tool in our proof of Theorem 1.1.

\begin{theo} \label{MA} \cite{CGP} Let $(X, \omega_0)$ be a compact K\"ahler manifold, and let 
$$B= \sum_jY_j$$ 
be a reduced divisor, whose support has simple normal crossings. We consider
a finite set of functions $(\psi_r)_{r=1,\dots A}\subset L^1(X)$ with logarithmic poles, normalized such that $\displaystyle \int_X\psi_r dV_0= 0$
for each $r$, where $dV_0$ is the volume element on $X$ induced by the metric $\omega_0$. Let $C_\psi$ be a positive constant, such that 
$$C_\psi\omega_0 + \ddc\psi_r\geq 0.$$
Then for each set of parameters $\Lambda= (\lambda, \varepsilon, \delta)\in ]0, 1]^{A}\times [0,1]\times ]0, 1/2]$  
there exists a positive (normalization) constant $C_\Lambda$ and a K\"ahler metric
$\displaystyle \omega_{\Lambda}= \omega_0+ \ddc\varphi_{\Lambda}$, such that 
$$\omega_{\Lambda}^n= C_\Lambda\frac{\prod_r\big(\lambda^2_r+ \exp(\psi_r)\big)}
{\prod_j(\varepsilon^2+ |s_j|^2)^{1-\delta}}\omega_0^n,\leqno {\rm (MA)}$$
and such that the following uniform estimates hold.

\begin{enumerate}
\item[{\rm (i)}] There exists a constant $C_\delta> 0$ depending on $\delta, C_\psi$ and $(X, \omega_0)$
but uniform with respect to $\lambda, \varepsilon\in [0, 1/2]^{A+1}$ such that $\sup_X|\varphi_{\Lambda}|\leq C_\delta$.
\smallskip

\item[{\rm (ii)}] For each coordinate system $(z^1,\dots, z^n)$ adapted to $(X, B)$ on $U$
we have 
$$\omega_{\Lambda}\leq C_\delta \Big(\sum_{j=1}^p\frac{\sqrt -1dz^j\wedge dz^{\ol j}}{(\varepsilon^2+ |z^j|^2)^{1-\delta}}+ \sum_{j=p+1}^n\sqrt -1dz^j\wedge dz^{\ol j}\Big)$$
in other words, the solution $\omega_\Lambda$ is bounded from above by the standard conic metric on each coordinate set. 
\smallskip

\item[{\rm (iii)}] There exists a constant $C_{\lambda, \delta}$ uniform with respect to $\varepsilon$ such that
$$\omega_{\Lambda}\geq C_{\lambda, \delta} \Big(\sum_{j=1}^p\frac{\sqrt -1dz^j\wedge dz^{\ol j}}{(\varepsilon^2+ |z^j|^2)^{1-\delta}}+ \sum_{j=p+1}^n\sqrt -1dz^j\wedge dz^{\ol j}\Big).$$

\end{enumerate}
\end{theo}

\noindent In the statement above we denote by $s_j$ the section of the bundle $\O(Y_j)$ whose zero set
equals $Y_j$, and we denote by $|s_j|$ the (pointwise) norm of $s_j$ measured with a 
non-singular metric $h_j$ on $\O(Y_j)$. We denote by $\theta_j$ the curvature of the 
bundle $\O(Y_j)$ with respect to $h_j$.

\begin{rema}  We see that (unlike in all the other recent articles concerning the metrics with conic singularities) in the statement above we have obtained uniform 
the estimates with respect to the parameter $\varepsilon\geq 0$. This will be
play an important role in the next section.
\end{rema} 

\medskip

\noindent In the remaining part of this section, we will evaluate the curvature of the canonical bundle $K_X$ endowed with the determinant of the metric given by the equation (MA). The formula reads as
\begin{equation}
\Theta_{\omega_\Lambda}(K_X)= \Theta(K_X)+ \sum_r\ddc \log\big(\lambda^2+ \exp(\psi_r)\big)
-(1-\delta)\sum_j\ddc \log (\varepsilon^2+ |s_j|^2).
\end{equation}
where $\Theta(K_X):= \Theta_{\omega_0}(K_X)$ is the curvature of the canonical bundle with respect to the volume element induced by the reference metric
$\omega_0$.
We expand next the Hessian terms in (2.3), as follows

\begin{eqnarray}
\ddc \log\big(\lambda^2+ \exp(\psi)\big)&= & \frac{\exp(\psi)}{\lambda^2+ \exp(\psi)}\ddc \psi+
\frac{\lambda^2\exp(\psi)}{\big(\lambda^2+ \exp(\psi)\big)^2}\sqrt{-1}\partial\psi\wedge\dbar \psi 
\label{Hess1} \nonumber \\
&= & \ddc\psi + \frac{\lambda^2\exp(\psi)}{\big(\lambda^2+ \exp(\psi)\big)^2}\sqrt{-1}\partial\psi\wedge\dbar \psi- \frac{\lambda^2}{\lambda^2+ \exp(\psi)}\ddc \psi \nonumber
\end{eqnarray}
and 

\begin{eqnarray}
\ddc \log(\varepsilon^2+ |s_j|^2)&= & \frac{\ep^2}{(\ep^2+ |s_j|^2)^2}\sqrt{-1}\partial s_j
\wedge \ol{\partial s_j} - \frac{|s_j|^2}{\ep^2+ |s_j|^2}\theta_j \nonumber \\
&= & -\theta_j+ \frac{\ep^2}{(\ep^2+ |s_j|^2)^2}\sqrt{-1}\partial s_j
\wedge \ol{\partial s_j} +  \frac{\varepsilon^2}{\ep^2+ |s_j|^2}\theta_j, \nonumber
\end{eqnarray}
respectively. Rearranging the terms, we obtain the following identity

\begin{eqnarray}
\Theta_{\omega_\Lambda}(K_X) &+ & (1-\delta)\sum_j\frac{\ep^2}{(\ep^2+ |s_j|^2)^2}\sqrt{-1}\partial s_j
\wedge \ol{\partial s_j} + \delta\sum_j\theta_j+ \nonumber  \\
&+ & \sum_r\frac{\lambda^2}{\lambda^2+ \exp(\psi_r)}\ddc \psi_r
- (1-\delta)\sum_j\frac{\varepsilon^2}{\ep^2+ |s_j|^2}\theta_j\nonumber  \\
&= &  \Theta(K_X)+ \sum_j\theta_j+  \sum_r\ddc\psi_r + \sum_r\frac{\lambda^2\exp(\psi_r)}{\big(\lambda^2+ \exp(\psi_r)\big)^2}\sqrt{-1}\partial\psi_r\wedge\dbar \psi_r. \nonumber
\end{eqnarray}

The relationship between this expression and Theorem 1.1 is easy to understand: if we add
the semi-positive form $\alpha$, the right hand side term can be assumed to be positive
by choosing $\psi$ in an appropriate manner (according to the pseudo-effectivity hypothesis). Thus, the curvature term
$\displaystyle \Theta_{\omega_\Lambda}(K_X)$ is naturally written as the \emph{difference of two semi-positive forms}, modulo some undesirable ``error" terms like
\begin{equation}
 \sum_r\frac{\lambda^2}{\lambda^2+ \exp(\psi_r)}\ddc \psi_r+ \sum_j\Big(\delta- \frac{(1-\delta)\varepsilon^2}{\ep^2+ |s_j|^2}\Big)\theta_j
\end{equation}
which will converge to zero, provided that we are 
choosing the set of parameters $\Lambda$ properly. It is at this point that the estimates in
Theorem \ref{MA} are playing a determinant r\^ole.

\section{Proof of Theorem 1.1}

In this section we will unfold our arguments for the proof of Theorem 1.1; we proceed in several steps. A first standard observation is that it is enough to prove this statement in case $\cF:= L$ a line bundle 
on $X$. The reduction to this case is classical, as follows. If we denote by $p$ the rank of 
$\cF$ at the generic point of $X$, then we have a sheaf injection 
$$0\to \Lambda^p\cF\to \otimes ^{mp}T^\star_X\langle B\rangle$$
and our claim follows by passing to the bi-dual; we refer to \cite{Koba} for further details.
\medskip

\noindent 
\textbf{Step 1.} By hypothesis, the class $c_1(K_X+ B)+\{\alpha \}$ is pseudo-effective. When combined with Theorem \ref{reg}, we infer the existence of a sequence of functions $(f_\eta)_{\eta> 0}$ such that 
the following holds:

\begin{enumerate}
\smallskip

\item[(1)] For each $\eta> 0$, the function $f_\eta$ has logarithmic poles, and $\int_Xf_\eta dV_0= 0$.
\smallskip

\item[(2)]  We have $\Theta(K_X)+ \sum_j\theta_j+ \alpha+ \ddc f_\eta\geq -\eta\omega_0$
in the sense of currents on $X$ (we are using here the same notations as in the preceding section).
\end{enumerate}

\smallskip

\noindent The relation (2) above is equivalent to the positivity of the current
$$T_\eta= \Theta(K_X)+ \sum_j\theta_j+ \alpha+ \ddc f_\eta+ \eta\omega_0;$$
for some technical reasons which will appear later on in the proof, we have to ``separate"
the codimension 1 singularities of $T_\eta$ from the rest. For a general closed positive current
this is indeed possible thanks to a result due to Y.-T. Siu in \cite{Siu}. In the present situation, 
this is much simpler, since $f_\eta$ has logarithmic poles; in any case, we write
\begin{equation}
T_\eta= \sum_{r=1}^{A_\eta}b^r_\eta [W_{r, \eta}]+R
\end{equation}
where the singularities of the closed positive current $R$ are in codimension 2 or higher. 
Let $g_{r,\eta}$ be an arbitrary, non-singular metric on the line bundle $\cO(W_{r, \eta})$ associated to 
the hypersurface $W_{r, \eta}$, and let $\beta_{r, \eta}$ be the associated curvature form. We denote by
$\sigma_{r, \eta}$ the tautological section of $\cO(W_{r, \eta})$, whose set of zeroes is
precisely $W_{r, \eta}$, and we define
\begin{equation}
\wh f_\eta:= f_\eta- \sum_{r=1}^{A_\eta}b^r_\eta\log |\sigma_{r, \eta}|^2.
\end{equation}
Since the current $R$ is positive we have
\begin{equation}\label{pos}
\Theta(K_X)+ \sum_{j=1}^N\theta_j+ \alpha-\sum_{r=1}^{A_\eta}b^r_\eta\beta_{r, \eta}+ \ddc \wh f_\eta+ \eta\omega_0\geq 0
\end{equation}
in the sense of currents on $X$.

\noindent We apply next Theorem \ref{MA}: for each set of parameters 
$$\Lambda:= (\varepsilon, \lambda, \eta, \rho, \delta)\in ]0, 1]\times ]0, 1]\times ]0, 1]\times ]0, 1]\times ]0, 1/2]$$ there exists a K\"ahler metric 
$$\omega_\Lambda:= \omega_0+ \ddc \varphi_\Lambda,\quad \int_X\varphi_\Lambda dV_0= 0$$
such that
\begin{equation}
\omega_{\Lambda}^n= C_\Lambda\frac{\big(\lambda^2+ \exp(\wh f_\eta)\big)\prod_r(\rho^2+ |\sigma_{r, \eta}|^2)^{b^r_\eta}}
{\prod_j(\varepsilon^2+ |s_j|^2)^{1-\delta}}\omega_0^n
\end{equation}
where $C_\Lambda$ is a normalization constant. By estimate (ii) of Theorem \ref{MA}, we have
\begin{equation}\label{estma}
\frac{1}{C_{\delta, \eta}}\omega_{\Lambda}\leq \sum_{j=1}^p\frac{\sqrt -1dz^j\wedge dz^{\ol j}}{(\varepsilon^2+ |z^j|^2)^{1-\delta}}+ \sum_{j=p+1}^n\sqrt -1dz^j\wedge dz^{\ol j}
\end{equation}
on a small coordinate set, where $(z^i)$ are local coordinates adapted to $(X, B)$. 

\begin{rema} Actually, the constant $C_{\delta, \eta}$ as above can be assumed to be
independent of $\eta$, provided that $\lambda$ and 
$\rho$ are small enough (depending on $\eta$). This amounts to take
$\rho\to 0$ and $\lambda\to 0$ before taking $\eta\to 0$ in the limiting process at the end of the proof.
However, the estimate as stated in \ref{estma} will be sufficient for us.
\end{rema}

Also, by the computations performed at the end of the preceding section, the curvature of $K_X$
with respect to the metric $\omega_{\Lambda}$ can be expressed as follows

\begin{eqnarray} \label{curb}
\Theta_{\omega_\Lambda}(K_X) &+ & (1-\delta)\sum_{j=1}^N
\frac{\ep^2}{(\ep^2+ |s_j|^2)^2}\sqrt{-1}\partial s_j
\wedge \ol{\partial s_j} + \alpha  + \eta\omega_0 \nonumber \\
&+ & \frac{\lambda^2}{\lambda^2+ \exp(\wh f_\eta)}\ddc \wh f_\eta
+ \sum_{j=1}^N
\Big(\delta- \frac{(1-\delta)\varepsilon^2}{\ep^2+ |s_j|^2}\Big)\theta_j+ 
\sum_{r=1}^{A_\eta} \frac{b^r_\eta\rho^2}{\rho^2+ |\sigma_{r, \eta}|^2}\beta_{r, \eta}
\nonumber \\
&= &  \Theta(K_X)+ \sum_{j=1}^N\theta_j- \sum_{r=1}^{A_\eta}b^r_\eta\beta_{r, \eta}+ 
\ddc \wh f_\eta + \alpha + \eta\omega_0
  \\
&+ & \sum_{r=1}^{A_\eta} \frac{b^r_\eta \rho^2}{(\rho^2+ |\sigma_{r, \eta}|^2)^2}\sqrt{-1}\partial \sigma_{r, \eta}
\wedge \ol{\partial \sigma_{r, \eta}}+ \frac{\lambda^2\exp(\wh f_\eta)}{\big(\lambda^2+ \exp(\wh f_\eta)\big)^2}\sqrt{-1}
\partial \wh f_\eta\wedge\dbar \wh f_\eta. \nonumber
\end{eqnarray}

\noindent In the relation above, we remark that the right hand side is 
positive definite.

\medskip

\noindent 
\textbf{Step 2.} As we have seen at the beginning of this section, the sheaf $\cF$ can be assumed to be a line bundle, say $L$. Let 
$$u\in H^0(X, \TB\otimes L^{-1})$$
be the holomorphic section corresponding to the injection $\displaystyle L\mapsto \TB$ whose existence is insured by hypothesis. We consider an arbitrary, non-singular metric $h_L$ on $L$.

Let $M:= X\setminus \Supp(B)$, and let $\Omega \subset M$ be an open set, such that 
$$\ol\Omega\subset M.$$
The restriction bundle $\displaystyle \TB|_M$ identifies with the tensor power of the 
usual cotangent bundle 
$\displaystyle \otimes^mT^\star_X$, and we can endow it with the metric induced by 
$\omega_\Lambda$. 
Another way of presenting this is that we endow the bundle $\TB$ with a singular metric 
given by $\omega_\Lambda$. Let $\mu$ be a positiv real number;
we have the following inequality
\begin{equation}\label{ddc}
\ddc\log\big(\mu^2+ |s|^{2m\delta}|u|_\Lambda^2\big)
\geq \frac{|s|^{2m\delta}|u|_\Lambda^2}{\mu^2+ |s|^{2m\delta}|u|_\Lambda^2}\big(\Theta_{h_L}(L)- \delta m\Theta_B\big)-
|s|^{2m\delta}\frac{\langle\Theta_{\Lambda}(E_m)u, u\rangle}{\mu^2+ |s|^{2m\delta}|u|_\Lambda^2},\end{equation}
where the notations are as follows: $E_m$ stands for $\TB$, and the symbol $\Theta_{\Lambda}(E_m)$ denotes the curvature of
$E_m$ with respect to the metric induced by $\omega_\Lambda$.
The quantity $|u|_\Lambda$ represents the norm of $u$ measured with respect to the metric induced by $h_L$ and 
$\omega_\Lambda$. We denote by $s$ the canonical section corresponding to the boundary divisor $B$, and 
by $|s|$ its norm, measured with respect to a non-singular metric; the associated curvature form is $\Theta_B$.   
We remark that 
this inequality (valid also in case $\mu=0$)
is nothing but a version of the Poincar\'e-Lelong formula for vector bundles.
On the other hand, we stress that the inequality
\ref{ddc} only holds on sets $\Omega\subseteq M$, since the quantity 
$\displaystyle |s|^{2m\delta}|u|_\Lambda^2$ becomes meromorphic across the support of $B$.

Let $\xi$ be a cut-off function, which equals 1 in the complement of an open set containing 
$\Supp(B)$ and which vanishes in a smaller open set containing 
$\Supp(B)$. We multiply \ref{ddc} with $\xi\omega_\Lambda^{n-1}$ and we integrate over 
$$X_0:= X\setminus \Supp(B);$$ 
we have
\begin{eqnarray}\label{ddcm}
\int_{X_0}\xi \ddc\log\big(\mu^2+ |s|^{2m\delta}|u|_\Lambda^2\big)\wedge\omega_\Lambda^{n-1}& \!\! + &\!\! {\int_{X_0}\xi\frac{|s|^{2m\delta}}{\mu^2+ |s|^{2m\delta}|u|_\Lambda^2}\langle\Theta_{\Lambda}(E_m)u, u\rangle\wedge\omega_\Lambda^{n-1} }\nonumber \\ 
& \geq 
& \!\!
\int_{X_0}\xi\frac{|s|^{2m\delta}|u|_\Lambda^2}{\mu^2+ |s|^{2m\delta}|u|_\Lambda^2}\big(\Theta_{h_L}(L)- m\delta\Theta_B\big)\wedge\omega_\Lambda^{n-1}.
\end{eqnarray}
\medskip

\noindent 
\textbf{Step 3.}
In the following lines we will evaluate the curvature term $\displaystyle {\langle\Theta_{\Lambda}(E_m)u, u\rangle}\wedge\omega_\Lambda^{n-1}$
in \ref{ddcm}. To this end, we consider a point $x_0\in \Omega$, and we will do a pointwise computation by using geodesic coordinates at $x_0$ (here we ignore completely the log structure induced by $B$). 

For any set of parameters $\Lambda$ the metric $\omega_\Lambda$ is K\"ahler, so there exists
a geodesic coordinate system $(z_1, \dots, z_n)$ centered at $x_0$, i.e. near $x_0$ we write 
\begin {equation}
\omega_\Lambda= \sum_{q=1}^n\sqrt{-1}dz_q\wedge d\ol z_{q}+ \sum_{j, k, \alpha, \beta} c^k_{j\alpha\ol \beta}z_j\ol z_{k}
\sqrt{-1}dz_\alpha\wedge d\ol z_{\beta}+ \cO(|z|^3).
\end{equation}
In the expression above, the complex numbers $(c^k_{j\alpha\ol \beta})$ are defined as follows
$$\Theta_\Lambda(T^\star_X)_{x_0}= 
\sum_{j, \alpha, \beta}c^k_{j\alpha\ol \beta}dz_\alpha\wedge d\ol z_{\beta}\otimes dz_j\otimes
\frac{\partial}{\partial z_k},$$
that is to say, they are the coefficients of the curvature tensor of $(T_X^\star, \omega_\Lambda)$.
\medskip

Let 
$$\displaystyle u= \sum_{J= (j_1,\dots, j_m)}u_J dz_{j_1}\!\!\otimes\dots\otimes dz_{j_m}$$
be the local expression of the section $u$. By definition, the curvature of $\TB$ acts on $u$ as follows

\begin{eqnarray}
\Theta_{\Lambda}(E_m)u &= & \sum_{r, J= (j_1,\dots, j_m)}u_J dz_{j_1}\!\!\otimes\dots
\otimes \Theta_\Lambda(T^\star_X)dz_{j_r}\!\!\otimes \dots \otimes dz_{j_m} \nonumber \\
&=& \sum_{J= (j_1,\dots, j_m)}\sum_{r, p, \alpha,\beta}u_Jc^r_{p\alpha\ol \beta} dz_{j_1}\!\!\otimes\dots
\otimes dz_{p}\!\otimes \dots \otimes dz_{j_m}\otimes dz_\alpha\wedge d\ol z_{\beta}. \nonumber 
\end{eqnarray} 

Therefore at the point $x_0$ we have 
\begin{equation}\label{comp}
\frac{\langle\Theta_{\Lambda}(E_m)u, u\rangle\wedge\omega_\Lambda^{n-1}}
{\omega_\Lambda^{n}}= \frac{1}{n}\sum_{J, p, d} u_J\ol u_{J_{\wh p d}}\theta^p_d
\end{equation}
where for any index $J=(j_1,\dots, j_m)$ we denote by $J_{\wh p d}$ the index obtained by replacing 
$j_p$ with $d$ in the expression of $J$ (the other elements are unchanged), and
$$\theta^p_d:= \sum_\alpha c^p_{d\alpha\ol \alpha} $$
are the coefficients of the Hermitian form on $T^\star_X$ induced by the curvature of the canonical bundle $\Theta_{\Lambda}(K_X)$
by contraction with the metric (we are using at this point the fact that the metric $\omega_\Lambda$ is K\"ahler). 

Given any (1,1)-form $\gamma$, we have an associate Hermitian form on $T^\star_X$ say $\Psi_\gamma$ obtained by contraction with the metric $\omega_\Lambda$. This is obtained by 
``raising the indexes", as follows. Locally near $x_0$ we write
$$\gamma= \sum_{p, q}\gamma_{p\ol q}dz_p\wedge d\ol z_q$$
and then the induced form $\Psi_\gamma$ on $T^\star_X$ is given by the expression
\begin{equation}
\Psi_\gamma= \sum_{p, q, r, s}\gamma_{p\ol q}\omega^{p\ol s}\omega^{r\ol q}
\frac{\partial}{\partial z_r}\wedge \frac{\partial}{\partial \ol z_s}.
\end{equation}

\noindent We consider the formula \ref{curb}, and we introduce the following notations. 
\medskip

\noindent $\bullet$  
$\displaystyle \Psi_{\Theta_\lambda}$ is the form induced by $\displaystyle \Theta_{\omega_\Lambda} 
(K_X)$;
\smallskip

\noindent $\bullet$ $\displaystyle \Psi_{\alpha, \varepsilon}$ is the form induced by 
$\displaystyle (1-\delta)\sum_{j=1}^N
\frac{\ep^2}{(\ep^2+ |s_j|^2)^2}\sqrt{-1}\partial s_j
\wedge \ol{\partial s_j} + \alpha  + \eta\omega_0$;
\smallskip

\noindent $\bullet$ $\displaystyle \Psi_{\Lambda}$ is the form induced by 
$$\tau_\Lambda:= \displaystyle \frac{\lambda^2}{\lambda^2+ \exp(\wh f_\eta)}\ddc \wh f_\eta
+ \sum_{j=1}^N
\Big(\delta- \frac{(1-\delta)\varepsilon^2}{\ep^2+ |s_j|^2}\Big)\theta_j+ 
\sum_{r=1}^{A_\eta} \frac{b^r_\eta\rho^2}{\rho^2+ |\sigma_{r, \eta}|^2}\beta_{r, \eta}.$$
\medskip

\noindent As we can see from the formula \ref{comp}, the curvature term in \ref{ddcm}
is expressed as follows
\begin{equation} \label{equa1}
\int_{X_0}\xi_\mu\frac{\langle\Theta_{\Lambda}(E_m)u, u\rangle}{|u|_\Lambda^2}\wedge\omega_\Lambda^{n-1}= \frac{1}{n}\int_{X_0}\xi_\mu \frac{\Psi_{\Theta_\lambda}(u, \ol u)}{|u|_\Lambda^2}\omega_\Lambda^n.
\end{equation}
where we denote 
$$\displaystyle \xi_\mu:= \xi\frac{|s|^{2m\delta}|u|_\Lambda^2}{\mu^2+ |s|^{2m\delta}|u|_\Lambda^2}\leq 1.$$

\medskip

We observe that the form $\displaystyle \Psi_{\alpha, \varepsilon}$ is positive definite (since $\alpha\geq 0$ by hypothesis), and then we have the inequalities

\begin{eqnarray}
\frac{\xi_\mu \Psi_{\Theta_\lambda}(u, \ol u)}{m|u|_\Lambda^2}\!\!\!\!& \leq & 
\!\!\!\!\frac{\xi_\mu \big(\Psi_{\Theta_\lambda}+ 
\Psi_{\alpha, \varepsilon}\big)(u, \ol u)}{m|u|_\Lambda^2}\nonumber \\
& = &\!\!\!\!\frac{\xi_\mu \big(\Psi_{\Theta_\lambda}+ \Psi_{\alpha, \varepsilon}+ \Psi_{\Lambda}\big)(u, \ol u)- \xi_\mu \Psi_{\Lambda}(u, \ol u)}{m|u|_\Lambda^2}\nonumber \\
&\leq &\!\!\!\!\tr_{\omega_\Lambda}\Big(\Theta_{\omega_\Lambda} 
(K_X)+ (1-\delta)\sum_{j=1}^N
\frac{\ep^2}{(\ep^2+ |s_j|^2)^2}\sqrt{-1}\partial s_j
\wedge \ol{\partial s_j} + \alpha  + \eta\omega_0+ \tau_\Lambda\Big)\nonumber \\
&- &\!\!\!\!\frac{\xi_\mu \Psi_{\Lambda}(u, \ol u)}{m|u|_\Lambda^2}\nonumber
\end{eqnarray}

The fact that the third inequality of the preceding relations holds true can be seen
as a consequence of the following elementary result,
combined with
the fact that the form $\displaystyle \Psi_{\Theta_\lambda}+ \Psi_{\alpha, \varepsilon}+ \Psi_{\Lambda}$ is definite positive, as the formula \ref{comp} shows it.

\begin{lem}
Let $\Theta= (\theta^p_d)$ be a positive definite Hermitian form; then we have
$$\sum_{J, r, d}u_J\ol u_{J(j_r, d)}\theta^d_{j_r}\leq m\Big(\sum_{J}|u_J|^2\Big)\big(\sum_j\theta^j_j\big)$$
for any set of complex numbers $u_J$, where $J=(j_1,\dots , j_m)\in \{1,\dots , n\}^m$ is the set of indices. 
In the relation above we denote $J(j_r, d)$ the index $(j_1,\dots j_{r-1}, d, j_{r+1},\dots , j_m)$
having the same components as $J$ except that we replace $j_r$ with $d$.
\end{lem}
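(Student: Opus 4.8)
The plan is to recognise the left-hand side as the quadratic form of a sum of ``one-slot'' operators on the tensor space $(\CC^n)^{\ot m}$ and then to reduce everything to a one-line spectral estimate. View $\Theta=(\theta^p_d)$ as a positive definite Hermitian endomorphism $A$ of $\CC^n$, so that $Ae_d=\sum_p\theta^p_de_p$ in the standard basis $e_1,\dots,e_n$; view the array $(u_J)$ as the vector $u=\sum_Ju_J\,e_{j_1}\ot\cdots\ot e_{j_m}$ in $(\CC^n)^{\ot m}$, equipped with the inner product making $\{e_J\}$ orthonormal. For $r=1,\dots,m$ let $A^{(r)}$ be the endomorphism of $(\CC^n)^{\ot m}$ acting as $A$ on the $r$-th factor and as the identity on the others.

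First I would check, straight from the definition of $J(j_r,d)$, the identity
$$\sum_{J,d}u_J\,\ol u_{J(j_r,d)}\,\theta^d_{j_r}=\langle A^{(r)}u,u\rangle,$$
so that summing over $r$ identifies the left-hand side of the Lemma with $\langle Bu,u\rangle$, where $B:=\sum_{r=1}^mA^{(r)}$. In particular this quantity is real and non-negative, $B$ being a sum of positive semidefinite Hermitian operators.

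Next I would bound the top eigenvalue of $B$. Diagonalising $A=U\,\mathrm{diag}(\mu_1,\dots,\mu_n)\,U^\star$ with $\mu_i\ge0$ (as $A\ge0$), the unitary $U^{\ot m}$ conjugates $B$ to $\sum_r\mathrm{diag}(\mu)^{(r)}$, which is diagonal in the corresponding tensor basis with entries $\mu_{k_1}+\cdots+\mu_{k_m}$. Hence, writing $v:=(U^\star)^{\ot m}u$ (so that $\|v\|=\|u\|$),
$$\langle Bu,u\rangle=\sum_K(\mu_{k_1}+\cdots+\mu_{k_m})\,|v_K|^2\le m\Big(\max_i\mu_i\Big)\sum_K|v_K|^2=m\Big(\max_i\mu_i\Big)\sum_J|u_J|^2.$$
Finally $\max_i\mu_i\le\sum_i\mu_i=\tr A=\sum_j\theta^j_j$ because the $\mu_i$ are non-negative, which yields the asserted inequality.

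The argument is purely linear-algebraic; the only delicate point is the bookkeeping in the first step, i.e.\ verifying that the combinatorial sum $\sum_{J,r,d}u_J\,\ol u_{J(j_r,d)}\,\theta^d_{j_r}$ is exactly the quadratic form of the Leibniz-type operator $\sum_rA^{(r)}$ on $(\CC^n)^{\ot m}$. After that it is merely the spectral theorem together with the inequality $\lambda_{\max}\le\tr$ for positive Hermitian matrices. I note in passing that positive semidefiniteness of $\Theta$ would already suffice, and that the bound is sharp, being attained in the limit where $\Theta$ has rank one and $u$ is the corresponding pure tensor.
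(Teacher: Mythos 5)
Your argument is correct and is essentially the paper's: both proofs rest on decomposing the sum into its $m$ one-slot contributions (your identity $\sum_{J,d}u_J\ol u_{J(j_r,d)}\theta^d_{j_r}=\langle A^{(r)}u,u\rangle$ is exactly the paper's rewriting of the $r$-th block as $\sum_{J',p,d}u_{pJ'}\ol u_{dJ'}\theta^d_p$) and then bounding each slot by $\big(\sum_J|u_J|^2\big)\big(\sum_j\theta^j_j\big)$, which is the paper's $m=1$ case. The only difference is that you prove that one-slot estimate explicitly by diagonalization ($\lambda_{\max}\le\mathrm{tr}$ for positive semidefinite matrices), where the paper leaves it as an ``easy verification''; your side remarks that semidefiniteness suffices and that the constant is sharp are accurate but not needed.
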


\begin{proof}
For $m=1$, the inequality to be proved is
$$\sum_{j, r}u_j\ol u_r\theta^r_j\leq \big(\sum_j |u_j|^2\big)\big(\sum_j\theta^j_j\big)$$
and the easy verification will not be detailed here. For a general $m\geq 2$, we just observe that 
the corresponding sum for $r=1$ can be written as
$$\sum_{J^\prime, p, d}u_{pJ^\prime}\ol u_{dJ^\prime}\theta^d_{p} $$
where $J^\prime=(j_2,\dots , j_m)$. For each fixed index $J^\prime$, the inequality corresponding to $m=1$ 
shows that we have
$$\sum_{J^\prime, p, d}u_{pJ^\prime}\ol u_{dJ^\prime}\theta^d_{p}\leq 
\big(\sum_j |u_{jJ^\prime}|^2\big)\big(\sum_j\theta^j_j\big) $$
and summing over all the indices $J^\prime$, and then over $r=1,\dots m$ we infer the result.
\end{proof}

\noindent These considerations and (\ref{equa1}) show that we have
 
\begin{eqnarray} \label{equa2}
\int_{X_0}\xi_\mu \frac{\langle\Theta_{\Lambda}(E_m)u, u\rangle}{|u|_\Lambda^2}\wedge\omega_\Lambda^{n-1}&\leq &m\int_{X_0}
\big(\Theta_{\omega_\Lambda} 
(K_X) + \alpha \big)\wedge \omega_\Lambda^{n-1}\nonumber \\
&+ & m(1-\delta)\int_{X_0}\Big(\sum_{j=1}^N
\frac{\ep^2}{(\ep^2+ |s_j|^2)^2}\sqrt{-1}\partial s_j
\wedge \ol{\partial s_j}\Big)\wedge \omega_\Lambda^{n-1}  \\
&+& m\int_{X_0}\tau_\Lambda\wedge \omega_\Lambda^{n-1} + \eta m\int_{X_0}\omega_0^n
- \int_{X_0}\xi_\mu \Psi_{\Lambda}(u, \ol u)\omega_\Lambda^n. \nonumber
\end{eqnarray}
\smallskip

\noindent By the computations done in Step 2, we have the identity
$$(1-\delta)
\frac{\ep^2}{(\ep^2+ |s_j|^2)^2}\sqrt{-1}\partial s_j
\wedge \ol{\partial s_j}+ \delta\theta_j= (1-\delta)\ddc \log(\ep^2+ |s_j|^2)+ \theta_j- 
\frac{(1-\delta)\varepsilon^2}{\ep^2+ |s_j|^2}\theta_j$$
for each index $j$, so by integration over $X\setminus \Supp(B)$ and Stokes formula
(which indeed holds on the open manifold, since the functions/forms we are dealing with are smooth),
the inequality \ref{equa2} becomes
\begin{eqnarray} \label{equa3}
\int_{X_0}\xi_\mu\frac{\langle\Theta_{\Lambda}(E_m)u, u\rangle}{|u|_\Lambda^2}\wedge\omega_\Lambda^{n-1}&\leq &m\int_X(\Theta(K_X)+ \alpha+ \sum_{j=1}^N\theta_j)\wedge \omega_0^{n-1} \nonumber\\
&+ & m\int_{X}\frac{\lambda^2}{\lambda^2+ \exp(\wh f_\eta)}\ddc \wh f_\eta \wedge \omega_\Lambda^{n-1} \nonumber\\
&- &m\sum_{j=1}^N
\int_{X}\frac{(1-\delta)\varepsilon^2}{\ep^2+ |s_j|^2}\theta_j\wedge \omega_\Lambda^{n-1} \\
&+ & m\sum_{r=1}^{A_\eta}\int_{X}\frac{b^r_\eta\rho^2}{\rho^2+ |\sigma_{r, \eta}|^2}\beta_{r, \eta}\wedge \omega_\Lambda^{n-1}+ \eta\int_{X}\omega_0^n \nonumber\\ 
&- &\int_{X_0}\xi_\mu\Psi_{\Lambda}(u, \ol u)\omega_\Lambda^n.\nonumber
\end{eqnarray}

\noindent In the formula \ref{equa3} above, we have used the fact that the singularities of 
$\wh f_\eta$ are in codimension 2 or higher, so that we have the equality
$$\int_{X}\frac{\lambda^2}{\lambda^2+ \exp(\wh f_\eta)}\ddc \wh f_\eta \wedge \omega_\Lambda^{n-1} 
= \int_{X_0}\frac{\lambda^2}{\lambda^2+ \exp(\wh f_\eta)}\ddc \wh f_\eta \wedge \omega_\Lambda^{n-1}. $$

We derive next a upper bound for the last term in \ref{equa3}. By the expression of the 
form $\tau_\Lambda$ we see that for each $\eta> 0$ there exists a constant $C_\eta$ 
depending on $\eta$ and a constant $C$ which is uniform with respect to the set of 
parameters $\Lambda$ such that 
$$\tau_\Lambda\geq -C_\eta\Big(\frac{\lambda^2}{\lambda^2+ \exp(\wh f_\eta)}+
 \sum_r\frac{\rho^2}{\rho^2+ |\sigma_{r, \eta}|^2}\Big)\omega_0- 
 C\Big(\delta+ \sum_j\frac{\varepsilon^2}{\ep^2+ |s_j|^2}\Big)\omega_0,$$
and therefore we infer that we have
\begin{eqnarray}\label{equa4}
-\int_{X_0}\xi_\mu \Psi_{\Lambda}(u, \ol u)\omega_\Lambda^n&\leq &
C_\eta\int_{X}\Big(\delta+ \frac{\lambda^2}{\lambda^2+ \exp(\wh f_\eta)}+
\sum_r\frac{\rho^2}{\rho^2+ |\sigma_{r, \eta}|^2}\Big)
\omega_0\wedge \omega_\Lambda^{n-1}\nonumber \\
&+& C\delta+ C\sum_j\frac{\varepsilon^2}{\ep^2+ |s_j|^2}.
\end{eqnarray}

\medskip

\noindent 
\textbf{Step 4.} We summarize here the conclusion of the computations of the preceding steps.
By \ref{ddcm}, the quantity 
\begin{equation}\label{intnum}
\int_{X_0}\xi\frac{|s|^{2m\delta}|u|_\Lambda^2}{\mu^2+ |s|^{2m\delta}|u|_\Lambda^2}\Theta_{h_L}(L)\wedge\omega_\Lambda^{n-1}\end{equation}
whose limit as $\xi$ tends to the characteristic function of $X\setminus \Supp(B)$ and $\mu$ tends to zero respectively we are interested in
is smaller than
$$m\int_X(\Theta(K_X)+ \alpha+ \sum_{j=1}^N\theta_j)\wedge \omega_0^{n-1}$$
which is the bound we hope to obtain, modulo the following terms
\begin{equation} \label{equa5}
\int_{X}\frac{\varepsilon^2}{\ep^2+ |s_j|^2}\omega_0\wedge \omega_\Lambda^{n-1}, 
\end{equation}

\begin{equation} \label{equa6}
\int_{X_0}\xi \ddc\log \big(\mu^2+ |s|^{2m\delta}|u|_\Lambda^2\big)\wedge\omega_\Lambda^{n-1}
\end{equation}
as well as
\begin{equation}\label{equa7}
C_\eta\int_{X}\frac{\lambda^2}{\lambda^2+ \exp(\wh f_\eta)}\omega_0\wedge \omega_\Lambda^{n-1},
\quad C_\eta\int_{X}\frac{\rho^2}{\rho^2+ |\sigma_{r, \eta}|^2}\omega_0\wedge \omega_\Lambda^{n-1}.
\end{equation}
and
\begin{equation}\label{added}
\int_{X}\frac{\lambda^2}{\lambda^2+ \exp(\wh f_\eta)}\ddc \wh f_\eta \wedge \omega_\Lambda^{n-1}, 
\end{equation}
\smallskip

\noindent In conclusion, Theorem 1.1 will be proved if we are able to show that by some choice of the 
cut-off function $\xi$ and the parameters $\Lambda=(\ep, \lambda, \rho, \eta, \delta)$ respectively, the quantities \ref{equa5}, \ref{equa7} and \ref{added} tend to zero, and 
\ref{equa6} is negative. This will be a consequence of 
the estimates provided by Theorem \ref{MA}.

\medskip

\noindent 
\textbf{Step 5.} We first let $\varepsilon\to 0$, while the other parameters are unchanged. The effects of this first operation are evaluated in what follows. 

To start with, we recall that by \ref{estma} we have 
\begin{equation}
\frac{1}{C_{\delta, \eta}}\omega_{\Lambda}\leq \sum_{j=1}^p\frac{\sqrt -1dz^j\wedge dz^{\ol j}}{(\varepsilon^2+ |z^j|^2)^{1-\delta}}+ \sum_{j=p+1}^n\sqrt -1dz^j\wedge dz^{\ol j}\nonumber
\end{equation}
locally at each point of $X$, where $(z_j)$ are coordinates adapted to the pair $(X, B)$.
As a consequence we infer that
\begin{equation}\label{equa8}
\lim_{\ep \to 0}\int_{X}\frac{\varepsilon^2}{\ep^2+ |s_j|^2}\omega_0\wedge \omega_\Lambda^{n-1}
= 0 
\end{equation}
by a quick computation which will not be detailed here. 

We will analyze next the quantity \ref{equa6} as $\ep\to 0$. Let $\Lambda_0:= (0, \lambda, \rho, \eta, \delta)$ be the new set of parameters. We know that 
$$\omega_\Lambda\to \omega_{\Lambda_0}$$
uniformly on compact sets of $X_0$. Therefore we have
\begin{equation} \label{equa9}
\lim_{\ep \to 0}\int_{X_0}\xi \ddc\log \big(\mu^2+ |s|^{2m\delta}|u|_\Lambda^2\big)\wedge\omega_\Lambda^{n-1}=
\int_{X_0}\xi \ddc\log \big(\mu^2+ |s|^{2m\delta}|u|_{\Lambda_0}^2\big)\wedge\omega_{\Lambda_0}^{n-1}
\end{equation}
The important difference between $|u|_\Lambda$ and $\displaystyle |u|_{\Lambda_0}$
is that the pole order of the latter quantity along the components of $B$ is smaller than $m\delta$. 
Indeed we have
\begin{equation}\label{equa10}
\sum_{j=1}^p
\frac{\sqrt -1dz^j\wedge dz^{\ol j}}{|z^j|^{2(1-\delta)}}+ \sum_{j=p+1}^n\sqrt -1dz^j\wedge dz^{\ol j} \leq 
\frac{1}{C_{\lambda, \rho, \delta, \eta}}\omega_{\Lambda_0} 
\end{equation}
as well as 
\begin{equation}\label{equa11}
\frac{1}{C_{\delta, \eta}}\omega_{\Lambda_0}\leq 
\sum_{j=1}^p
\frac{\sqrt -1dz^j\wedge dz^{\ol j}}{|z^j|^{2(1-\delta)}}+ \sum_{j=p+1}^n\sqrt -1dz^j\wedge dz^{\ol j}
\end{equation}
by combining \ref{estma} with the Monge-Amp\`ere equation verified by 
$\omega_\Lambda$. The upshot is that the function
$$\log \big(\mu^2+ |s|^{2m\delta}|u|_{\Lambda_0}^2\big)$$
is bounded by a constant depending on $\mu, \Lambda_0$, but completely independent with respect to the size of the support of $\xi$.
\medskip

\noindent At this stage or our proof, we choose a sequence of cut-off functions $$\xi:= \chi_\tau$$
as in \cite{CGP} converging to the characteristic function of the set 
$X\setminus \Supp(B)$, so that the quantity \ref{equa6} becomes
\begin{equation} \label{equa12}
\int_{X_0}\log \big(\mu^2+ |s|^{2m\delta}|u|_{\Lambda_0}^2\big)\ddc \chi_\tau\wedge\omega_{\Lambda_0}^{n-1}.
\end{equation}
Indeed, the integration by parts is legitimate, since for every positive $\tau$, the relevant quantities are non-singular on the support of $\chi_\tau$.

If we let $\tau\to 0$, the quantity \ref{equa12} tends to zero, thanks to the computations e.g. in \cite{CGP}, together with the fact that the bound of the absolute value of the function
$\log \big(\mu^2+ |s|^{2m\delta}|u|_{\Lambda_0}^2\big)$ is independent of $\tau$.

Also, we remark that the term \ref{intnum} becomes simply
\begin{equation}
\label{inter}
\int_{X}\frac{|s|^{2m\delta}|u|_{\Lambda_0}^2}{\mu^2+ |s|^{2m\delta}|u|_{\Lambda_0}^2}\Theta_{h_L}(L)\wedge\omega_{\Lambda_0}^{n-1}.
\end{equation}

\noindent As a conclusion of this step, by first letting $\ep\to 0$ and then $\xi \to \chi_{X_0}$ as indicated above, we infer that we have
\begin{eqnarray}\label{step5}
\int_{X}\frac{|s|^{2m\delta}|u|_\Lambda^2}{\mu^2+ |s|^{2m\delta}|u|_{\Lambda_0}^2}\Theta_{h_L}(L)\wedge\omega_{\Lambda_0}^{n-1}&\leq & m\int_X(\Theta(K_X)+ \alpha+ \sum_{j=1}^N\theta_j)\wedge \omega_0^{n-1}\nonumber \\
&+ & C_\eta \Big(\int_{X}\frac{\lambda^2}{\lambda^2+ \exp(\wh f_\eta)}\omega_0\wedge \omega_{\Lambda_0}^{n-1}\Big)\nonumber \\ 
&+ & C_\eta \Big(
\sum_r\int_{X}\frac{\rho^2}{\rho^2+ |\sigma_{r, \eta}|^2}\omega_0\wedge \omega_{\Lambda_0}^{n-1}
\Big)\\
&+ & C\int_{X}\frac{\lambda^2}{\lambda^2+ \exp(\wh f_\eta)}\ddc \wh f_\eta \wedge \omega_{\Lambda_0}^{n-1}+ (\delta+ \eta)C.\nonumber 
\end{eqnarray}
We let $\mu\to 0$; the left-hand side term in \ref{step5} becomes
$$\int_{X}\Theta_{h_L}(L)\wedge\omega_0^{n-1}$$
as we see by dominated convergence theorem.
\medskip

\noindent 
\textbf{Step 6.} It is the last step of our proof; we first establish the equalities
\begin{equation}\label{equa17}
\lim_{\lambda\to 0}\int_{X}\frac{\lambda^2}{\lambda^2+ \exp(\wh f_\eta)}\omega_0\wedge \omega_{\Lambda_0}^{n-1}= 0
\end{equation}
and
\begin{equation}\label{equa18}
\lim_{\rho\to 0}\int_{X}\frac{\rho^2}{\rho^2+ |\sigma_{r, \eta}|^2}\omega_0\wedge \omega_{\Lambda_0}^{n-1}= 0
\end{equation}
for any set of parameters $\delta$ and $\eta$. This is quite easy: by the relation 
\ref{equa11} we have 
$$\int_{X}\frac{\lambda^2}{\lambda^2+ \exp(\wh f_\eta)}\omega_0\wedge \omega_{\Lambda_0}^{n-1}
\leq C_{\delta, \eta}\int_{X}\frac{\lambda^2}{\lambda^2+ \exp(\wh f_\eta)}
\frac{dV}{\prod_j|s_j|^{2-2\delta}}$$
and \ref{equa17}, \ref{equa18} follow by dominated convergence theorem.
\medskip


We treat next the remaining term; we claim that the next inequality holds
\begin{equation}\label{equa19}
\lim_{\lambda\to 0}\int_{X}\frac{\lambda^2}{\lambda^2+ \exp(\wh f_\eta)}\ddc \wh f_\eta \wedge \omega_{\Lambda_0}^{n-1}= 0
\end{equation}
for any set of parameters $(\eta, \delta)$ within the range we have fixed at the beginning of the proof. 

The relation \ref{equa19} is established as follows:
\begin{equation}\label{equa20}
\int_{X}\frac{\lambda^2}{\lambda^2+ \exp(\wh f_\eta)}\ddc \wh f_\eta \wedge \omega_{\Lambda_0}^{n-1}
\leq 
C_{\eta, \delta}\int_{X}\frac{\lambda^2}{\lambda^2+ \exp(\wh f_\eta)}
(\ddc \wh f_\eta + C_\eta\omega_{\delta, \rm st})\wedge \omega_{\delta, \rm st}^{n-1}
\end{equation}
where $C_\eta$ is a constant such that the (1,1) form 
$\ddc \wh f_\eta + C_\eta\omega_{\delta, \rm st}$ is definite positive. We use the same notations as before, namely $\omega_{\delta, \rm st}$ is the standard metric with conic singularities 
along $B$, with all cone angles equal to $2\pi(1-\delta)$.

We define 
$$d\mu_{\eta, \delta}:= (\ddc \wh f_\eta + C_\eta\omega_{\delta, \rm st})\wedge \omega_{\delta, \rm st}^{n-1};$$
it is a positive measure of finite mass. Let $p:X_\eta\to X$ be a birational map such that 
$$p^\star\big(\ddc \wh f_\eta + C_\eta\omega_{\delta, \rm st}\big)= [E]+ \gamma_\eta$$
where $E$ is a divisor and $\gamma_\eta$ is smooth. The important information about the 
singularities of $\wh f_\eta$ is the $E$ is $p$-contractible. Thus we have
$$p^\star d\mu_{\eta, \delta}= \gamma_\eta\wedge p^\star \omega_{\delta, \rm st}^{n-1}:= 
d\wh \mu_{\eta, \delta}$$
that is to say, a measure with mild singularities on $X_\eta$. In fact, an immediate calculation shows
that the measure $\displaystyle d\wh \mu_{\eta, \delta}$ is smaller than the determinant of a standard 
metric with conic singularities of angles $2\pi(1-\delta)$ along the support of the divisor $p^\star(B)$,  
modulo
some constant \emph{independent of $\lambda$}.
The quantity we have to analyze becomes
$$\int_{X_\eta}\frac{\lambda^2}{\lambda^2+ \exp(\wh f_\eta\circ p)}d\wh \mu_{\eta, \delta}$$
and indeed it converges to zero if $\lambda\to 0$ by the arguments already invoked before
(the dominate convergence theorem). The assertion is therefore established. 

\begin{rema} We remark 
that the hypothesis concerning the codimension of the singularities of $\wh f_\eta$ is essential. 
Indeed, if $v$ is a holomorphic section of an ample line bundle and if 
we take e.g. $\log |v|^2$ instead of $\wh f_\eta$, we see that \ref{equa19} is not verified!
\end{rema}

\medskip

\noindent After this last operation, the inequality \ref{step5} becomes
\begin{equation}\label{equa21}
\int_{X}\Theta_{h_L}(L)\wedge\omega_0^{n-1}\leq (\delta+ \eta)C+ m\int_X(\Theta(K_X)+ \alpha+ \sum_{j=1}^N\theta_j)\wedge \omega_0^{n-1}
\end{equation}
and Theorem 1.1 is proved, by taking $\delta, \eta\to 0$.

\begin{rema} It is possible to adapt the previous argument to the case where 
$\alpha$ is possibly singular, i.e. the class $\{\alpha\}$ is psef rather than Hermitian semi-positive. However, we have to impose the condition that the 
generic Lelong number of $\alpha$ along each component of $B$ is equal to zero. We leave the
details to the interested reader.
\end{rema}

\begin{rema} 
An important application of our arguments was obtained in \cite{G2}, where 
the stability of log-tangent bundle with respect to $K_X+ B$ is proved, under the hypothesis that this latter bundle is nef and big. 
\end{rema}

\section{Proof of Theorem 1.2}
\medskip

In this part of our paper we will prove Theorem 1.2. As we have already mentioned, the result itself is already known: its proof in \cite{CP} is based on a the generic semi-positivity result for the cotangent of the orbifold pairs whose canonical bundle is pseudo-effective. The proof we present here
follows essentially the same ideas, modulo the fact that we are using Theorem 1.1 instead of the aforementioned 
result concerning the orbifold cotangent bundle.

\subsection{Continuity method} 

For the rest of this paper the manifold $X$ is assumed to be projective. Let $A$ be an ample
on $X$ such that $K_X+ B+ A$ is ample.
We consider the interval 
\begin{equation}
J:= \{t\in [0, 1] : K_X+ B+ tA \hbox{ is big }\}
\end{equation}
It is clear that $J$ is non-empty and open. We show next that $J$ is equally closed, provided that 
there exists an injection
$$L\to \TB$$
for some $m> 0$, where $L$ is a big line bundle on $X$.

Let $(t_k)\subset J$, converging to a real number $t_\infty$. We have to show that the limit  $\R$-bundle 
$K_X+ B+ t_\infty A$ is big. The first observation is that this bundle is at least pseudo-effective, given that it is a limit of big bundles. 
\medskip

As a warm-up, we first discuss the a very particular case, namely \emph{we assume that the limit 
$K_X+ B+ t_\infty A$ is nef}. Then we argue as follows: for each $k$ the bundle $K_X+ B+ t_k A$ 
is ample, and by Theorem 1.1 we have the numerical inequality
\begin{equation}\label{equa41}
\int_Xc_1(L)\wedge c_1(K_X+ B+ t_k A)^{n-1}\leq m\int_Xc_1(K_X+ B+ t_k A)^{n},
\end{equation}
where we stress on the fact that $m$ is a purely numerical constant, 
in particular it is independent of $k$.

Since $L$ is big, we certainly have
$$\int_Xc_1(L)\wedge c_1(K_X+ B+ t_k A)^{n-1}\geq C(L)
\int_Xc_1(A)\wedge c_1(K_X+ B+ t_k A)^{n-1}$$
for some constant $C(L)$ depending exclusively on $L$. 

On the other hand, by the Hovanski-Teissier concavity inequality, we have
\begin{equation}\label{equa42}
\int_Xc_1(A)\wedge c_1(K_X+ B+ t_k A)^{n-1}\geq \Big(\int_Xc_1(A)^n\Big)^{\frac{1}{n}}
\Big(\int_Xc_1(K_X+ B+ t_k A)^{n}\Big)^{\frac{n-1}{n}}
\end{equation}
By \ref{equa41} and \ref{equa42} we infer that
$$\int_Xc_1(K_X+ B+ t_k A)^{n}\geq C_0> 0$$
for some constant $C_0$ which is independent of $k$; hence, the same will be true for the limit,
so the $\R$ bundle $K_X+ B+ t_\infty A$ is big, and the proof of the particular case is finished.
\medskip

We will prove the general case of Theorem 1.2 along the same line of arguments. 
The bundle $K_X+ B+ t_\infty A$ is no longer assumed to be nef, but nevertheless, it is at least 
pseudo-effective, and the idea is to decompose it into two orthogonal pieces, a nef part and an effective part. For an arbitrary big line bundle, this cannot be done in a sufficiently accurate way
so as to be relevant for us (the approximate Zariski decomposition is not useful here,
as we will comment at the end of this paper). However, the bundle we are dealing with is 
an \emph{adjoint bundle}
and the additional techniques required to treat this case are explained in the following paragraphs.

\subsection{Desingularisation and Zariski decomposition} In this section we collect a few important facts 
concerning the principalization of the ideals of holomorphic functions. They will be used in conjunction with
the finite generation result in \cite{BCHM}.

Let $\cI\subset \cO_X$ be an ideal sheaf, and let $x\in X$ be a point. The \emph{vanishing order} of $\cI$ at $x$ is defined as follows
$$\ord_x(\cI):= \max \big\{r : \cI\subset m_x^r\big\}$$
where $m_x$ is the ideal sheaf of $x$. Given $Z\subset X$ a submanifold, \emph{the order of $\cI$ along $Z$} is defined as
the vanishing order of $\cI$ at the generic point of $Z$. Finally, \emph{the maximal order of $\cI$ along $Z$} is equal to
the maximum of the numbers $\ord_x(\cI)$ for all $x\in Z$.
\smallskip

A \emph{marked ideal} $(\cI, m)$ is a couple consisting of an ideal sheaf $\cI$
together with a positive integer $m$. Let $Z$ be a smooth subvariety of $X$ such that the order of $\cI$ along $Z$
is at least $m$. The inverse image of the ideal $\cI$ with respect to  
the blow-up $\pi: \widehat X\to X$ of $X$ along $Z$ can we written as the product of the principal ideal corresponding to the 
exceptional divisor to the power $m$, multiplied with an ideal of holomorphic functions on $\widehat X$. The latter is called 
\emph{proper transform of the marked ideal $(\cI, m)$}.

Let $E:=(E_1,\dots, E_s)$ be a set of non-singular hypersurfaces of $X$, such that 
$\sum_jE_j$ is a simple normal crossing divisor. Following \cite{JK1}, we call $(X, \cI, m, E)$ a 
\emph{triple}.

\smallskip

 A \emph{smooth blow-up sequence of order $\geq m$} of the triple $(X, \cI, m, E)$ is a sequence 
$(X_j, \cI_j, m, E_{(j)})_{j=0,..., r}$ verifying the following requirements.

\begin{enumerate}

\item[{\rm (1)}] For each $j=0,..., r$ we have $\displaystyle \cI_j\subset \cO_{X_j}$.
\smallskip

\item[{\rm (2)}] We have $(X_0, \cI_0, m, E_{(0)}):= (X, \cI, m, E)$ and for each $j=1,..., r$ the map $X_j\to X_{j-1}$ is the blow-up 
of a smooth center $Z_{j-1}\subset X_{j-1}$ such that $\displaystyle \ord_{Z_{j-1}}(\cI_{j-1})\geq m$, and such that $Z$ has only simple normal crossings with the components of $E_{(j)}$.
\smallskip

\item[{\rm (3)}] For each $j=1,..., r$ the ideal $\cI_j$ is the proper transform of the marked ideal $(\cI_{j-1}, m)$
with respect to the map $X_{j}\to X_{j-1}$.
\smallskip

\item[{\rm (4)}] For each $j=1,..., r$, the set of hypersurfaces $E_{(j)}$ is the birational transform of $E$,
together with the exceptional divisor of the map $X_j\to X_{j-1}$.

\end{enumerate}

\medskip

\noindent
In this context, we quote a result from \cite{JK1}, page 41. 

\begin{theo} [\cite{JK1}] Let $X$ be a smooth projective manifold, and let $0\neq \cI\subset \cO_X$ be an ideal sheaf.
Let $m$ be a positive integer assumed to be smaller that the maximal order of $\cI$ along $X$, and let $E$ be a simple normal crossing divisur. Then there exists a 
smooth blow-up sequence of order $\geq m$ of the triple $(X, \cI, m, E)$ such that the proper transform of $\cI$ by the resulting map $\widehat X\to X$
is an ideal whose maximal order along $\widehat X$ is strictly smaller than $m$.
\end{theo}

\bigskip

\noindent As a consequence of the algorithm described in the previous theorem (with $m=1$) we have the following result.

\begin{theo} \label{prince} Let $(X, B)$ be a smooth log-canonical pair,
and let $0\neq \cI\subset \cO_X$ be an ideal sheaf.
There exists a birational map $p: X_1\to X$ such that the inverse image of $\cI$ with respect to $p$ is a principal ideal, 
whose zero loci plus the inverse image of the boundary divisor $B$ is a normal crossing divisor. 
Moreover, $p$ chosen so that the next property is satisfied.

\noindent {\rm \bf (Supp)} Let $x\in X$ be an arbitrary point, and let $(f_1,..., f_g)$ be the local generators of 
the ideal $\cI$ on an open set $U$ centered at $x$. Then the support of the relative canonical bundle
$K_{X_1/X}$ intersected with the inverse image of $U$ 
is contained in the set $\displaystyle \bigcap_{j=1}^g \big(f_j\circ p = 0\big).$

\end{theo}

\bigskip


\noindent We are considering next the context of the adjoint bundles, in which 
we have the following important result. This represents the second main technical point in the proof of 
Theorem 1.2.

\begin{theo}[\cite{BCHM}] For each $k\geq 1$, 
the algebra ${\mathcal R}_k$ of pluricanonical sections corresponding to $K_X+ B+ t_kA$ is finitely generated. In particular, the 
$\Q$-bundle $K_X+ B+ t_kA$ admits a Zariski decomposition--obtained by considering a desingularisation of the
ideal defined by the generators of ${\mathcal R}_k$.
\end{theo}

\smallskip

\noindent We consider the generators $\displaystyle \big(u^{(k)}_j\big)_{j= 1,... g_k}$ of the algebra ${\mathcal R}_k$; we assume that they are sections of the bundle $m_k\big(K_X+ B+ t_kA\big)$. Let 
$T_k$ be the curvature current of the metric on $K_X+ B+ t_kA$ induced by the generators above. There exists a modification of
$X$, say 
$p_k: X_k\to X$ such that 
$$p_k^\star(T_k)= \omega_k+ [N_k]$$
where $\omega_k$ is a semi-positive (1,1)-form on $X_k$ corresponding to the $\Q$-bundle $P_k$ on $X_k$, and $N_k$ is an effective divisor, such that 
$$\Vol(K_X+ B+ t_kA)= \int_{X_k}\omega_k^n.$$
The finiteness of the algebra ${\mathcal R}_k$ reflects into the \emph{equality} above--in general, we only have an approximation of the volume by the top power of $\omega_k$. 
\medskip

\noindent Combined with Theorem \ref{prince} above, we obtain the next statement.

\begin{cor} For each $k\geq 1$ there exists a non-singular manifold $X_k$ and a birational map $p_k: X_k\to X$ such that the following holds true. 

\begin{enumerate}

\item[{\rm (a)}] We have 
$p_k^\star(K_X+ B+ t_kA)= P_k+ N_k,$ where $N_k$ is effective, 
$P_k$ is big, without base points, and such that
$$P_k^n= \Vol(K_X+ B+ t_kA).$$
\smallskip

\item[{\rm (b)}] The orthogonality relation holds true $P_k^{n-1}\cdot N_k= 0.$
\smallskip

\item[{\rm (c)}] We have $\displaystyle \Supp\big(K_{X_k/X}\big)\subset \Supp (N_k)$.

\end{enumerate}

\end{cor}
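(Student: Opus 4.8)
The plan is to read off the three properties directly from the finite generation statement of \cite{BCHM} quoted above, combined with the principalization Theorem~\ref{prince}. Write $D_k:=K_X+B+t_kA$; since $t_k\in J$ this is a big divisor, and (we may assume $t_k\in\Q$, as $J$ is open) a big $\Q$-divisor, so $\Vol(D_k)>0$. Using finite generation of $\mathcal R_k$ I would first choose a positive integer $m_k$, sufficiently divisible, so that the truncated section ring $\bigoplus_{\ell\ge0}H^0(X,\ell m_kD_k)$ is generated in degree one. Let $\cI_k\subset\O_X$ be the base ideal of $|m_kD_k|$. Generation in degree one forces the base ideal of $|\ell m_kD_k|$ to be $\cI_k^{\ell}$ for every $\ell\ge1$, i.e. $H^0(X,\ell m_kD_k)=H^0(X,\cI_k^{\ell}(\ell m_kD_k))$. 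Then apply Theorem~\ref{prince} to the log smooth (hence log canonical) pair $(X,B)$ and to $\cI_k\neq0$: it produces a birational morphism $p_k\colon X_k\to X$ with $X_k$ smooth, $p_k^{-1}\cI_k\cdot\O_{X_k}=\O_{X_k}(-F_k)$ principal with $F_k$ effective, $\Supp F_k+p_k^{-1}(B)$ a normal crossing divisor, and satisfying the support condition $\mathbf{(Supp)}$. Set $N_k:=\tfrac1{m_k}F_k$ and $P_k:=p_k^{\star}D_k-N_k$, so that $m_kP_k=p_k^{\star}(m_kD_k)-F_k$.

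For (a): the base ideal of $\{p_k^{\star}s:s\in H^0(X,m_kD_k)\}$ is $p_k^{-1}\cI_k\cdot\O_{X_k}=\O_{X_k}(-F_k)$, so dividing out the canonical section of $F_k$ exhibits $|m_kP_k|$ as base point free; in particular $P_k$ is nef. Pulling the identities $H^0(X,\ell m_kD_k)=H^0(X,\cI_k^{\ell}(\ell m_kD_k))$ back through $p_k$ — using that $X_k$ dominates the normalized blow-up of $\cI_k$, so $p_{k*}\O_{X_k}(-\ell F_k)$ is the integral closure $\overline{\cI_k^{\ell}}$, and that $H^0(X,\cI_k^{\ell}(\ell m_kD_k))=H^0(X,\overline{\cI_k^{\ell}}(\ell m_kD_k))$ — yields $H^0(X_k,\ell m_kP_k)=H^0(X,\ell m_kD_k)$ for all $\ell$. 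Hence $\Vol(m_kP_k)=\Vol(m_kD_k)=m_k^{n}\Vol(D_k)$, and since $m_kP_k$ is nef this volume equals $(m_kP_k)^{n}$, so $P_k^{n}=\Vol(D_k)=\Vol(K_X+B+t_kA)>0$; thus $P_k$ is big and base point free, $N_k$ is effective, and $p_k^{\star}(K_X+B+t_kA)=P_k+N_k$.

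For (b): by (a), $P_k$ is nef, $N_k$ is effective, and $\Vol(P_k+N_k)=\Vol(D_k)=P_k^{n}$. For $t\in[0,1]$ write $P_k+N_k=(P_k+tN_k)+(1-t)N_k$; then $t\mapsto\Vol(P_k+tN_k)$ is nondecreasing and pinched between $\Vol(P_k)=P_k^{n}$ and $\Vol(P_k+N_k)=P_k^{n}$, hence constant. Since $P_k$ is big, the volume function is differentiable at $P_k$, and its derivative in the direction $N_k$ equals $n\langle P_k^{n-1}\rangle\cdot N_k=n\,P_k^{n-1}\cdot N_k$ (the last equality because $P_k$ is nef); constancy forces $P_k^{n-1}\cdot N_k=0$. (Equivalently, this orthogonality is part of the standard Zariski decomposition attached to a big divisor with finitely generated section ring, cf. \cite{BCHM}.) For (c): by construction $\Supp N_k=\Supp F_k$ is the zero locus of the principal ideal $p_k^{-1}\cI_k\cdot\O_{X_k}$, which over any chart $U$ on which $\cI_k=(f_1,\dots,f_g)$ is exactly $\bigcap_{j=1}^{g}\{f_j\circ p_k=0\}$; property $\mathbf{(Supp)}$ asserts precisely that $\Supp(K_{X_k/X})\cap p_k^{-1}(U)$ lies in this same set, and covering $X$ by such charts gives $\Supp(K_{X_k/X})\subset\Supp(N_k)$.

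The step I expect to be the main obstacle is the volume identity $P_k^{n}=\Vol(D_k)$ in (a): this is exactly the point where finite generation of $\mathcal R_k$ is indispensable — for a general big divisor one obtains only an inequality together with an approximate statement — and it rests on the careful choice of $m_k$ and the consequent stabilization $\mathrm{Bs}|\ell m_kD_k|=\ell\cdot\mathrm{Bs}|m_kD_k|$ for all $\ell$. The orthogonality in (b), though classical, relies on the same fact, supplemented by the differentiability of the volume function on the big cone.
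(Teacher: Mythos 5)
Your construction is correct and, for parts (a) and (c), is essentially the argument of the paper: one passes to a truncation of the finitely generated ring $\mathcal R_k$ (equivalently, to the ideal generated by its generators), principalizes the base ideal $\cI_k$ via Theorem~\ref{prince} applied to the pair $(X,B)$, and reads off the decomposition $p_k^\star(K_X+B+t_kA)=P_k+N_k$ with $m_kP_k$ globally generated, the volume identity $P_k^n=\Vol(K_X+B+t_kA)$ coming precisely from the stabilization of base ideals that finite generation provides, and (c) coming from the support condition {\rm\bf (Supp)}. (A small remark on (a): the appeal to integral closures is superfluous — the sandwich $H^0(X,\ell m_kD_k)\subseteq H^0(X_k,\ell m_kP_k)\subseteq H^0(X_k,p_k^\star(\ell m_kD_k))=H^0(X,\ell m_kD_k)$ already gives the equality of sections, the first inclusion because every section of $\ell m_kD_k$ lies in $\cI_k^{\ell}$ by degree-one generation.) Where you genuinely diverge is (b): the paper simply invokes the orthogonality lemma of \cite{BDPP}, whereas you deduce $P_k^{n-1}\cdot N_k=0$ from the constancy of $t\mapsto \Vol(P_k+tN_k)$ on $[0,1]$ together with the differentiability of the volume function at the big class $P_k$ (and the identification of the positive product $\langle P_k^{n-1}\rangle$ with $P_k^{n-1}$ for $P_k$ nef), using the sign condition $P_k^{n-1}\cdot N_k\ge 0$ to conclude. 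This is a valid alternative; it exploits the exact volume equality available in the finitely generated situation, at the cost of importing the (nontrivial) differentiability theorem for volumes, while the paper's citation of \cite{BDPP} keeps the input at the level of the orthogonality estimate and works uniformly without further analysis. Either route is acceptable here; note only that both your argument and the paper's implicitly take $t_k\in\Q$, as you correctly arranged at the outset.
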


\medskip

\noindent For the crucial information (b) we refer to \cite{BDPP}, orthogonality lemma. The point (c) is a direct consequence of 
Theorem \ref{prince} discussed above.
\smallskip

\subsection{End of the proof}

We denote by $\displaystyle \big(\Xi_k^{(j)}\big)_{j=1,..., i_k}$ the support of the exceptional divisor of the map
$p_k$. By the inclusion (c) above together with the orthogonality relation (b) of the Corollary 1.2 we obtain
\begin{equation}\label{equa44}
P_k^{n-1}\cdot \Xi_k^{(j)}= 0\end{equation}
for each $j= 1,..., i_k$. This is an important fact, since it will allow us to ``ignore" the presence of the 
exceptional divisors, and argue in what follows basically as in the case nef explained at the beginning of our proof.
\smallskip

Let $B= Y_1+...+ Y_N$; we have the change of variables formula 
\begin{equation}\label{equa45}
p_k^\star(K_X+ B)= K_{X_k}+ B_k- E_k^2\end{equation}
where $B_k:= \sum_{j=1}^N\ol Y_j+ E_k^1$; here we denote by $\ol Y_j$ the proper transform of $Y_j$, and $E_k^1, E_k^2$ are effective divisors, such that their support 
is contained in the exceptional divisor of $p_k$. The divisor $E_k^1$ is reduced, thanks to the fact that $(X, B)$ is an lc pair
and moreover we have 
\begin{equation}\label{equa46}
\Supp(B_k)\subset \Supp\big(p_k^\star(B)\big).\end{equation}
\smallskip

By hypothesis, we have an injection
$$L\to \TB$$
where $L$ is a big line bundle. We consider the $p_k$-inverse image of this injection, and we get
\begin{equation}\label{equa47}
p_k^\star(L)\to \otimes^mT^\star_{X_k}\langle \ol B_k\rangle\end{equation}
where $\ol B_k$ is the reduced part of the inverse image of the divisor $p_k^\star(B)$.

\smallskip

The bundle 
$K_{X_k}+ \ol B_k+ t_\infty p_k^*A$ is pseudo-effective, as it follows from
\ref{equa45} combined with \ref{equa46}. Indeed, as a consequence of these relations we infer that 
the difference $\ol B_k- B_k$ is effective and $p_k$-exceptional.

Moreover, the bundle $P_k$ is nef so by Theorem 1.1
we obtain the inequality
\begin{equation}\label{equa48}
\int_{X_k}p_k^\star(L)\cdot P_k^{n-1}\leq m\int_{X_k}(K_{X_k}+ \ol B_k+ t_0p_k^*A)\cdot P_k^{n-1}.\end{equation}
 
\smallskip

By the relations \ref{equa46} and \ref{equa45}, together with the definition of $B_k$ we infer that
\begin{equation}\label{equa49}(K_{X_k}+ \ol B_k+ t_\infty p_k^*A)\cdot P_k^{n-1}= p_k^\star(K_X+ B+ t_\infty A)\cdot P_k^{n-1}.\end{equation}
The equalities (a) and (b) in Corollary 1 show that we have
\begin{equation}\label{equa50}p_k^\star(K_X+ B+ t_\infty A)\cdot P_k^{n-1}= P_k^{n}- 
(t_k-t_\infty)p_k^\star(A) \cdot P_k^{n-1}\end{equation}
Since the bundle $L$ is big, we certainly have $\displaystyle p_k^\star(L)\cdot P_k^{n-1}\geq \varepsilon_0 p_k^\star(A) \cdot P_k^{n-1}$ for some positive $\varepsilon_0$, so all in all we obtain
\begin{equation}\label{equa51}P_k^{n}\geq mp_k^\star(A) \cdot P_k^{n-1}\end{equation}
We use the Khovanskii-Teissier inequality as in the ``nef" case discussed previously, and as a consequence we get
\begin{equation}\label{equa52}p_k^\star(A) \cdot P_k^{n-1}\geq \big(A^n\big)^{1\over n}\cdot \big(P_k^{n}\big)^{n-1\over n}.\end{equation}
The conjunction of \ref{equa51} and \ref{equa52} gives
\begin{equation}\label{equa53}P_k^n\geq C(m)>0\end{equation}
uniformly with respect to $k$, so Theorem 1.2 is proved.\qed

\section{Further remarks}

We consider the following exact sequence 
\begin{equation}\label{equa61}
0\to L\to \otimes ^mT^\star_X\la B\ra \to Q\to 0 
\end{equation}
where $(X, B)$ is a pair with the properties stated in Theorem 1.1, and $L$ is a line bundle. Let $\{\alpha\}$
be a semi-positive class of (1,1)-type, such that $c_1(K_X+ B)+ \{\alpha\}$ is psef. 

\noindent As a consequence of Theorem 1.1, we infer that we have the inequality
\begin{equation}\label{equa62}
\int_Xc_1(Q)\wedge \omega^{n-1}\geq \Big(\frac{n^m-1}{n-1}- 
m\Big)\int_X(c_1(K_X+ B)+ \{\alpha\})\wedge \omega^{n-1}-\frac{n^m-1}{n-1}\int_X\alpha\wedge \omega^{n-1}
\end{equation}
for any class $\omega$ belonging to the closure of the K\"ahler cone of $X$.

We remark that even if $\alpha=0$, so that $K_X+B$ is psef, the inequality \ref{equa62} is more precise than 
the one derived from Y. Miyaoka's result, cf. \cite{Miyaoka} as soon as $m\geq 2$. 

\noindent The next result is an easy consequence of (\ref{equa62}).

\begin{theo}
Under the hypothesis above, we assume that we have 
$$\displaystyle \int_Xc_1(Q)\wedge \omega^{n-1}=0$$
for some K\"ahler metric $\omega$. Then $K_X+B$ is numerically trivial.
\end{theo}

\medskip

If $L$ in (\ref{equa61}) is a sheaf of arbitrary rank, say $r$, then we obtain a weaker inequality
\begin{equation}\label{equa63}
\int_Xc_1(Q)\wedge \omega^{n-1}\geq -\frac{n^m-1}{n-1}\int_X\alpha\wedge \omega^{n-1}
\end{equation}
Other applications will be discussed in a forthcoming publication.

\mainmatter

\backmatter

\bibliographystyle{smfalpha}
\bibliography{biblio}

\end{document}